\theoremstyle{plain}
\newtheorem{theorem}{Theorem}[section]
\newtheorem{alfthm}{Theorem}[section]
\newtheorem{lemma}[theorem]{Lemma}
\newtheorem{proposition}[theorem]{Proposition}
\newtheorem{remark}[theorem]{Remark}
\newtheorem{definition}{Definition}[section]
\newtheorem{conjecture}{Conjecture}[section]
\DeclareMathOperator{\R}{\mathbb{R}}
\DeclareMathOperator{\N}{\mathbb{N}}
\DeclareMathOperator{\Z}{\mathbb{Z}}
\def\P{\mathbb{P}^{1}}
\def\SL{SL(2,\R)}
\def\GL{GL(2,\R)}
\def\id{\operatorname{id}}
\def\hp{\hat{p}}
\def\tq{\tilde{q}}
\def\quand{\quad\text{and}\quad}
\begin{document}

\title
[Probability distributions with non-compact support]
{Lyapunov exponents of probability distributions with non-compact support}

\author{Adriana S\'anchez}
\address{ICMC-USP -- Av. Trab. S\~{a}o-carlense 400, S\~{a}o Carlos, 13566-590 S\~{a}o Paulo, Brazil.}
\email{asanchez@icmc.usp.br}

\author{Marcelo Viana}
\address{IMPA -- Estrada D. Castorina 110, Jardim Bot\^anico, 22460-320 Rio de Janeiro, Brazil.}
\email{viana@impa.br}

\thanks{Work was partially supported by Fondation Louis D. – Institut de France (project coordinated by M. Viana), CNPq and FAPERJ.
A.S. was supported by FAPESP (Fundação de Amparo à Pesquisa do Estado de São Paulo), grant 2018/18990-0 and Universidad de Costa Rica.}

\date{\today}

\keywords{Lyapunov exponents, linear cocycles, Wasserstein topology}
\subjclass[2010]{Primary: 37H15; Secondary: 37A20, 37D25}


\begin{abstract}
A recent result of Bocker--Viana asserts that the Lyapunov exponents of compactly supported probability distributions in $\GL$ depend continuously on the distribution. We investigate the general, possibly non-compact case. We prove that the Lyapunov exponents are semi-continuous with respect to the Wasserstein topology, but not with respect to the weak* topology. Moreover, they are not continuous with respect to the Wasserstein topology.
\end{abstract}


\maketitle


\section{Introduction}

Let $M=\SL^{\Z}$ and $f:M\to M$  be the shift map over $M$ defined by
\[
 (\alpha_n)_n\mapsto (\alpha_{n+1})_n.
\]
Consider the function
\[
 A:M\to\SL,\quad(\alpha_n)_n\mapsto \alpha_0,
\]
and define its $n$-th iterate by
\[
 A^n((\alpha_k)_k)=\alpha_{n-1}\cdots\alpha_0.
\]

Given any probability measure $p$ in $\SL$, the associated Bernoulli measure $\mu=p^{\Z}$ on $M$ is invariant under $f$.
Let $L^1(\mu)$ denote the space of $\mu$-integrable functions on $M$.
Assuming that $\log^+\|A^{\pm}\|\in L^1(\mu)$, it follows from Furstenberg-Kesten~\cite[Theorem~2]{FurstenbergKesten} that
\[
 \lambda_+(x)=\lim_{n}\frac{1}{n}\log\|A^n(x)\|\quand\lambda_-(x)=\lim_{n}\frac{1}{n}\log\|A^{-n}(x)\|^{-1},
\]
exist for $\mu$-almost every $x\in M$. Furthermore, the functions $x \mapsto \lambda_\pm(x)$ are constant on the orbits of $f$.
Thus, by the ergodicity of the Bernoulli measure, they are constant on a full $\mu$-measure set.
We call these constants the \emph{Lyapunov exponents} of $\mu$, and we represent them as $\lambda_\pm(p)$.


The way Lyapunov exponents depend on the corresponding probability distribution $p$ has been studied by several authors.
Furstenberg, Kifer~\cite{FurstenbergKifer} proved that this dependence is continuous at every quasi-irreducible point, that is,
such that there is at most one subspace invariant under every matrix in the support of $p$.
Also under an irreducibility condition, LePage~\cite{LePage} proved H\"older continuity of the Lyapunov exponents.

More recently, Bocker and Viana \cite{BockerViana} proved that continuity actually holds at every point,
in the realm of compactly supported probability distributions. Backes, Butler and Brown \cite{BBB} have much extended this result,
to linear cocycles with invariant holonomies, over hyperbolic systems with local product structure on compact spaces.
Moreover, still in the compactly supported case, Tall and Viana~\cite{VianaYaya} proved that H\"older continuity holds whenever
the exponents $\lambda_-(p)$ and $\lambda_+(p)$ are distinct. In general, when the exponents coincide, one has a weaker but still
quantitative, modulus of continuity, called log-H\"older continuity.

For much more on this topic, see Viana~\cite[Chapter~10]{LLE}, Duarte, Klein~\cite{DuK}, Viana~\cite{Disc} and the references therein.

The aim of this note is to study the (semi)continuity of the Lyapunov exponents in the general case, when the support of the probability
measure $p$ is not necessarily compact. Our main result reads as follows
(Theorem \ref{th.NotSemicontWeakTop} and Theorem \ref{th.SemicontWass} provide more detailed statements):

\begin{alfthm}\label{th.TheoremA}
The function $p\mapsto\lambda_{+}(p)$ is upper semi-continuous for the Wasserstein topology but not for the weak* topology.
The same remains valid for $p\mapsto\lambda_{-}(p)$ with lower semi-continuity.
\end{alfthm}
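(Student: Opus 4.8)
The plan is to split Theorem~\ref{th.TheoremA} into a positive part (upper semicontinuity of $p\mapsto\lambda_+(p)$ for the Wasserstein topology) and a negative part (failure for the weak* topology), and to deduce the assertions about $\lambda_-$ at once from the identity $\lambda_+(p)+\lambda_-(p)=0$, which holds for $\SL$-cocycles since the sum of the two Lyapunov exponents is the (vanishing) exponent of the determinant cocycle. As $\lambda_-=-\lambda_+$, upper semicontinuity of $\lambda_+$ \emph{is} lower semicontinuity of $\lambda_-$, and any weak*-convergent sequence witnessing the failure for $\lambda_+$ witnesses it for $\lambda_-$. Here ``Wasserstein topology'' means the $W_1$-distance associated with an unbounded complete metric on $\SL$, say $d(\alpha,\beta)=\norm{\alpha-\beta}$; on $\{p:\int d(\alpha,\id)\,dp(\alpha)<\infty\}$, where $W_1$ is finite, one has $\int\log\norm{\alpha}\,dp=\int\log\norm{\alpha^{-1}}\,dp<\infty$ automatically, so $\lambda_\pm(p)$ are well defined there by Furstenberg--Kesten.

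For the positive part I would use the subadditive representation
\[
\lambda_+(p)=\inf_{n\ge1}\tfrac1n\,a_n(p),\qquad a_n(p):=\int_{\SL^n}\log\norm{\alpha_{n-1}\cdots\alpha_0}\,dp(\alpha_0)\cdots dp(\alpha_{n-1}),
\]
which follows from submultiplicativity of the operator norm (Fekete's lemma) together with the Furstenberg--Kesten / subadditive ergodic theorem. An infimum of continuous functions is upper semicontinuous, so it suffices to show each $a_n$ is $W_1$-continuous. The integrand $\phi_n(\alpha_0,\dots,\alpha_{n-1})=\log\norm{\alpha_{n-1}\cdots\alpha_0}$ is continuous, nonnegative (every matrix in $\SL$ has norm $\ge1$), and bounded by $\sum_i\log\norm{\alpha_i}\le\sum_i d(\alpha_i,\id)$, i.e.\ it grows at most linearly in the distance to the base point of the product space. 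If $p_k\to p$ in $W_1$, then $p_k^{\otimes n}\to p^{\otimes n}$ in $W_1$ on $\SL^n$ (tensorize optimal couplings: the cost is $\le nW_1(p_k,p)\to0$ for the $\ell^1$-product metric), and for $W_1$-convergent sequences the integral of any continuous function of at most linear growth passes to the limit — this is the standard characterization of $W_1$-convergence as weak convergence plus convergence of first moments, which forces uniform integrability of $d(\cdot,\id)$ and hence of $\phi_n$. Thus $a_n(p_k)\to a_n(p)$ for every $n$, so $\lambda_+$ is upper semicontinuous in $W_1$; equivalently $\lambda_-=-\lambda_+$ is lower semicontinuous.

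For the negative part I would produce an explicit sequence. Take $B_k=\diag(e^{t_k},e^{-t_k})$ with $t_k\to\infty$ and $p_k=(1-\varepsilon_k)\delta_{\id}+\varepsilon_k\delta_{B_k}$ with $\varepsilon_k\to0$ chosen so that $\varepsilon_k t_k\to c>0$ (e.g.\ $\varepsilon_k=1/t_k$). Since $\id$ and $B_k$ commute, a length-$n$ word in these two matrices equals $B_k^{j_n}$, where $j_n$ is the number of factors equal to $B_k$, so $\tfrac1n\log\norm{A^n}=t_k\,j_n/n\to t_k\varepsilon_k$ almost surely by the law of large numbers, whence $\lambda_+(p_k)=\varepsilon_k t_k\to c>0=\lambda_+(\delta_{\id})$. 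On the other hand $p_k\to\delta_{\id}$ weak*, because the atom at $B_k$ has mass $\varepsilon_k\to0$. Hence $\lambda_+$ is not upper semicontinuous for weak* convergence (and $\lambda_-$ not lower semicontinuous). Note $p_k\not\to\delta_{\id}$ in $W_1$, since $W_1(p_k,\delta_{\id})=\varepsilon_k\norm{B_k-\id}\sim\varepsilon_k e^{t_k}\to\infty$, consistent with the positive part.

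The step I expect to be the main obstacle is the non-compactness in the Wasserstein estimate: one must control the \emph{unbounded} observables $\phi_n$ along $W_1$-convergent sequences, which is precisely where the use of an unbounded compatible metric on $\SL$ matters — with a bounded metric $W_1$ would metrize the weak* topology and the theorem would be false. Everything else (Fekete's lemma and the subadditive ergodic theorem, ``infimum of continuous functions is upper semicontinuous'', the reduction $\lambda_-=-\lambda_+$, and the explicit commuting counterexample) is routine.
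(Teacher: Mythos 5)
Your proposal is correct, but both halves run along genuinely different lines from the paper. For the positive part, the paper (Theorem~\ref{th.SemicontWass}) works with Furstenberg's formula $\lambda_+(p)=\max_\eta\int\Phi\,dp\,d\eta$ over $p$-stationary measures $\eta$ on $\P$: it takes maximizing measures $\eta_k$ for $p_k$, extracts a weak* limit $\eta$ (compactness of $\P$ together with Proposition~\ref{prop.ClosenessStat}), and proves $\int\Phi\,dp_k\,d\eta_k\to\int\Phi\,dp\,d\eta\le\lambda_+(p)$ by a cutoff argument on a large ball of $\SL$, the tails being controlled by the moment conditions in Proposition~\ref{prop.Wconv}. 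You instead write $\lambda_+(p)=\inf_n\tfrac1n a_n(p)$ with $a_n$ the finite-time averages and show each $a_n$ is $W_1$-continuous (tensorized couplings give $W_1(p_k^{\otimes n},p^{\otimes n})\le n\,W_1(p_k,p)$, and the log-norm of the product has linear growth, so item (5) of Proposition~\ref{prop.Wconv} applies), then use that an infimum of continuous functions is upper semicontinuous; this is more elementary, avoids projective dynamics and stationary measures altogether, and extends verbatim to the top exponent of $GL(d,\R)$-cocycles over Bernoulli shifts, whereas the paper's route exposes more structure (the semicontinuity defect is located in the possible non-maximality of the limiting stationary measure), which is the mechanism behind continuity results such as Bocker--Viana. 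For the weak* failure, the paper's Theorem~\ref{th.NotSemicontWeakTop} builds an infinitely supported diagonal measure $q$ with $\lambda_+(q)=0$ that lies outside $P_1(\SL)$ and perturbs its weights, while your two-atom sequence $p_k=(1-\varepsilon_k)\delta_{\id}+\varepsilon_k\delta_{B_k}\to\delta_{\id}$ weak* with $\lambda_+(p_k)=\varepsilon_k t_k\to c>0$ is simpler and keeps every measure, including the limit, inside $P_1(\SL)$, so it makes the additional point that the failure stems from weak* convergence itself and not from missing first moments (consistently, $W_1(p_k,\delta_{\id})\to\infty$, as you note). Finally, your reduction $\lambda_-=-\lambda_+$, valid for $\SL$ since the determinant cocycle has zero exponent and $\|\alpha^{-1}\|=\|\alpha\|$, cleanly replaces the paper's ``analogous argument'' for the statements about $\lambda_-$.
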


Regarding continuity of Lyapunov exponents we prove (see Theorem \ref{th.NotContWass} for a more detailed statement):

\begin{alfthm}\label{th.TheoremB}
The functions $p\mapsto\lambda_\pm(p)$ are not continuous for the Wasserstein topology.
\end{alfthm}

\medskip{\bf Acknowledgements.} We thank Mauricio Poletti and El Hadji Yaya Tall for many helpful comments and suggestions.

\section{Wasserstein Topology}

Let $(M,d)$ be a Polish space, that is a complete separable metric space.
The \emph{Wasserstein space} is the space of probability measures on $M$ with finite moments of order $1$,
that is,
\[
P_1(M)=\left\lbrace \mu\in P(M):\int_M d(x_0,x)d\mu(x)<+\infty\right\rbrace,
\]
where $x_0\in M$ is arbitrary and $P(M)$ denotes the space of Borel probability measures on $M$.
This space does not depend on the choice of the point $x_0$.

We use $\mu_k\overset{\ast}{\longrightarrow}\mu$ to mean convergence in the weak* topology.
Moreover, $\mu_k\overset{W}{\longrightarrow}\mu$ will mean convergence in the Wasserstein topology,
characterized by the following definition (see Definition~6.8 in Villani~\cite{Villani}):

\begin{definition}\label{def.Wconv}
Let $(M,d)$ be a Polish space and $(\mu_k)_{k\in \N}$ be a sequence in $P_1(M)$.
Given any $\mu \in P_1(M)$, we say that $\mu_k\overset{W}{\longrightarrow}\mu$ if one of the following
equivalent conditions is satisfied for some (and then any) $x_0\in M$:
\begin{enumerate}
\item $\mu_k\overset{\ast}{\longrightarrow}\mu$ and $\int d(x_0,x)d\mu_k(x)\to\int d(x_0,x)d\mu(x);$
\item $\mu_k\overset{\ast}{\longrightarrow}\mu$ and
      \[
       \limsup_{k\to\infty}\int d(x_0,x)d\mu_k(x)\leq \int d(x_0,x)d\mu(x);
      \]
\item $\mu_k\overset{\ast}{\longrightarrow}\mu$ and
      \[
       \lim_{R\to\infty}\limsup_{k\to\infty}\int_{d(x_0,x)\geq R} d(x_0,x)d\mu_k(x)=0;
      \]
\item \label{Item.4PropWconv} For every continuous function $\varphi$ with $|\varphi(x)|\leq C(1+d(x_0,x))$, $C\in\R$, one has
      \[
       \int\varphi(x)d\mu_k(x)\to\int\varphi(x)d\mu(x).
      \]
\end{enumerate}
\end{definition}

Let $(M,\mu)$ and $(N,\nu)$ be two probability spaces. A \emph{coupling} of $\mu$ and $\nu$ is a measure $\pi$ on $M\times N$
such that $\pi$ projects to $\mu$ and $\nu$ on the first and second coordinate, respectively.
When $\mu=\nu$ we call $\pi$ a \emph{self-coupling}.

If $(M,d)$ is a Polish space, the \emph{Wasserstein distance} between two probability measures $\mu$ and $\nu$ on $M$
is defined by

\begin{equation}\label{eqn.WassersteinDist}
 W_1(\mu,\nu)=\inf_{\pi\in\Pi(\mu,\nu)}\int_M d(x,y)d\pi(x,y),
\end{equation}
where the infimum is taken over the set $\Pi(\mu,\nu)$ of all the couplings of $\mu$ and $\nu$.
This distance metrizes the Wasserstein topology (see \cite[Theorem ~6.18]{Villani}):

\begin{theorem}\label{th.WarTop}
Let $(M,d)$ be a Polish space. Then the Wasserstein  distance $W_1$ metrizes the Wasserstein topology
in the space $P_1(M)$:
$$
\mu_k\overset{W}{\longrightarrow}\mu \text{ if and only if } W_1(\mu_k,\mu)\to 0.
$$
Moreover, with this metric $P_1(M)$ is also a complete separable metric space and, every $\mu\in P_1(M)$
can be approximated by probability measures with finite support.
\end{theorem}

This also implies that $W_1$ is continuous on $P_1(M)$.


\section{Semicontinuity}\label{sec.Sem}

It is a well-known fact that when the measures have compact support, the Lyapunov exponents are semicontinuous with the weak* topology (see for example \cite[Chapter~9]{LLE}). However, in the non compact setting this is no longer true. If they were semicontinuous then every measure with vanishing Lyapunov exponent would be a point of continuity. The next theorem shows that this is not the case.

\begin{theorem}\label{th.NotSemicontWeakTop}
There exist a measure $q$ and a sequence of measures $(q_n)_n$ on $\SL$ converging to $q$ in the weak* topology, such that $\lambda_+(q_n)\geq1$ for $n$ large enough but $\lambda_{+}(q)=0$.
\end{theorem}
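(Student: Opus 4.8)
The plan is to build $q$ as a measure supported on elliptic (rotation) matrices — so that $\lambda_+(q)=0$ — together with a perturbation $q_n$ that puts a small amount of mass on a hyperbolic matrix with very large norm, in such a way that $q_n \overset{\ast}{\longrightarrow} q$ but the hyperbolic part forces $\lambda_+(q_n)\geq 1$. The point is that weak* convergence allows the ``escaping mass'' to carry away a definite amount of expansion: a tiny weight $\varepsilon_n\to 0$ on a matrix of norm $e^{t_n}$ with $\varepsilon_n t_n \to \infty$ contributes an arbitrarily large amount to the exponent while being invisible in the weak* limit.

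Concretely, I would take $q = \delta_{R_\theta}$ for an irrational rotation $R_\theta$ (or more safely a measure supported on two rotations generating a dense subgroup, to guarantee ergodicity and a genuinely zero exponent via Furstenberg-type considerations), so that $A^n(x)$ is a product of rotations and $\lambda_+(q)=0$. For the sequence, set
\[
q_n = (1-\varepsilon_n)\,\delta_{R_\theta} + \varepsilon_n\,\delta_{H_n},
\]
where $H_n = \mathrm{diag}(e^{t_n}, e^{-t_n})$ and $\varepsilon_n \to 0$ while $\varepsilon_n t_n \to \infty$ (e.g. $\varepsilon_n = 1/n$, $t_n = n^2$). Weak* convergence $q_n \overset{\ast}{\longrightarrow} q = \delta_{R_\theta}$ is immediate since $\varepsilon_n \to 0$ and the test functions are bounded. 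The main work is the lower bound $\lambda_+(q_n) \geq 1$: here I would use that the top Lyapunov exponent of a Bernoulli product dominates the ``per-visit'' average contribution of the hyperbolic symbol. One clean way is to observe that along a $q_n^{\Z}$-typical orbit the symbol $H_n$ appears with asymptotic frequency $\varepsilon_n$, and to get a lower bound on $\frac1N\log\|A^N(x)\|$ by choosing, inside each long block, a unit vector aligned with the expanding direction of the (many) occurrences of $H_n$; alternatively, and more robustly, use the integral formula / Furstenberg-type lower bound $\lambda_+(q_n) \geq \int \log \frac{\|A v\|}{\|v\|}\, dq_n(A)\, d\eta(v)$ for a suitable stationary measure $\eta$, or simply the elementary bound $\lambda_+ (q_n)\ge \limsup_n \frac1n \mathbb{E}\log\|A^n\| \ge$ (frequency of $H_n$)$\times(t_n - O(1)) \sim \varepsilon_n t_n \to \infty$, after checking the rotations only contribute an $O(1)$ multiplicative distortion that is washed out. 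For large $n$ this is $\geq 1$.

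The main obstacle I anticipate is making the lower bound on $\lambda_+(q_n)$ genuinely rigorous rather than heuristic: a product of random $SL(2,\R)$ matrices is not submultiplicative in a way that lets one just ``add up'' the contributions of the hyperbolic symbols, because intervening rotations can rotate a vector out of the expanding cone. To handle this I would either (i) choose $H_n$ and the rotation so that the expanding direction of $H_n$ is a fixed line $\ell$ and track the cone around $\ell$ — controlling how much a bounded rotation (or bounded product of rotations) can move $\ell$, which is a uniform $O(1)$ in the norm — so that consecutive occurrences of $H_n$, even separated by rotation blocks, compound multiplicatively up to a bounded-per-visit loss; or (ii) invoke the subadditive ergodic theorem together with a direct estimate $\mathbb{E}\log\|A^{N}\| \geq c\, N \varepsilon_n t_n$ for $N$ large, using independence and Jensen to bound the rotation contributions from below by their expectation, which is $0$ by symmetry. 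Either route reduces the claim to an elementary frequency-times-strength computation, and the flexibility in choosing $\varepsilon_n, t_n$ gives all the room needed. Finally, a brief remark would record why this does not contradict the compact-support semicontinuity of Section~\ref{sec.Sem}: the supports of $q_n$ are unbounded, so $q_n \not\overset{W}{\longrightarrow} q$, consistently with Theorem~\ref{th.TheoremA}.
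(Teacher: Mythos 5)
Your overall mechanism (tiny weight $\varepsilon_n$ on a huge hyperbolic matrix with $\varepsilon_n t_n$ bounded below, invisible in the weak* limit) is the same escaping-mass idea the paper uses, and your weak* convergence and $\lambda_+(q)=0$ steps are fine. But there is a genuine gap at the one step that carries all the weight: the lower bound $\lambda_+(q_n)\ge 1$. For your measure $(1-\varepsilon_n)\delta_{R_\theta}+\varepsilon_n\delta_{H_n}$ the exponent is \emph{not} obtained by ``frequency of $H_n$ times $(t_n-O(1))$'' with an $O(1)$ correction: between two kicks the vector is rotated through essentially arbitrary angles (the gaps have length of order $1/\varepsilon_n$, so the rotation sweeps the whole circle), and a kick that arrives when the direction is within $e^{-t_n}$ of the contracting axis loses up to $2t_n$, not $O(1)$. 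Route (ii) does not repair this: by subadditivity $\frac1N\mathbb{E}\log\|A^N\|$ decreases to $\lambda_+$, so a lower bound on $\mathbb{E}\log\|A^N\|$ is only useful if you prove it linearly in $N$ with the right constant for \emph{all} large $N$ — and proving that is exactly the cancellation problem again; ``Jensen plus symmetry of the rotations'' does not produce such a bound, since $\log\|\cdot\|$ is not additive along the product. Route (i) via the Furstenberg formula $\lambda_+(q_n)=\max_\eta\int\log\frac{\|\alpha v\|}{\|v\|}\,dq_n\,d\eta$ would work, but only after you prove a quantitative non-concentration estimate for the stationary measure near the vertical direction (you need $\int\log|\cos\phi|\,d\eta$ to be much larger than $-(t_n-\varepsilon_n^{-1})$), which your sketch does not address; this estimate, not the bookkeeping, is the actual content of the proof. (Your closing remark is right in conclusion but for the wrong reason: each $q_n$ has bounded support; what fails is $W_1(q_n,q)\approx\varepsilon_n e^{t_n}\to\infty$.)

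The paper avoids this difficulty entirely by making the example exactly computable: all matrices in the support are simultaneously diagonal (hence commute), namely $\alpha(2k-1)=\diag(\sigma_k,\sigma_k^{-1})$ and $\alpha(2k)=\diag(\sigma_k^{-1},\sigma_k)$ with $\sigma_k=e^{l^k}$, so by the law of large numbers $\lambda_+$ is just the absolute value of the expected logarithm of the first diagonal entry, an explicit affine function of the weights. The limit measure has symmetric weights ($p_{2k-1}=p_{2k}$), giving $\lambda_+(q)=0$ with $\log\|A\|$ integrable, and the perturbation only moves a mass $l^{-n}$ onto the symbol with $\log\sigma_n=l^n$, giving $\lambda_+(q_n)=1+l^{-n+1}\ge 1$ by direct computation, while the weights converge pointwise so $q_n\overset{\ast}{\longrightarrow}q$. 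If you want to keep your rotation-plus-kick example, you must either supply the stationary-measure (or angle-equidistribution) estimate sketched above, or switch, as the paper does, to a commuting family where no alignment issue arises.
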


\begin{proof}
Define a function $\alpha:\N\to\SL$ by
\begin{equation*}
\alpha(2k-1) = \begin{pmatrix}
                  \sigma_k & 0\\
                  0 & \sigma_k^{-1}\\
                 \end{pmatrix}
\quand
\alpha(2k) = \begin{pmatrix}
                  \sigma_k^{-1} & 0\\
                  0 & \sigma_k\\
               \end{pmatrix} \\
\end{equation*}
where $(\sigma_k)_k$ is an increasing sequence such that $\sigma_1>1$ and $\sigma_k\to+\infty$.

Let $\mu=q^{\Z}$ be a measure in $M$ where $q$ is the measure on $\SL$ given by
\[
 q=\sum_{k\in \N} p_k\delta_{\alpha(k)},
\]
with $\sum p_k=1$, $0<p_k<1$ for all $k\in\N$.

The key idea to construct this example is to find $p_k$ and $\sigma_k$ such that $\log\|A\|\in L^1(\mu)$ and satisfying the hypothesis above. Hence, consider $0<r<1/2<s<1$, and $l=s/r>1$. Let us take $\sigma_k=e^{l^k}$ for all $k$, which is an increasing sequence provided that $l>1$.

For $k\geq 2$ take $p_{2k-1}=p_{2k}=r^k$. Since $0<r<1/2 $ it is easy to see that
\[
 \sum_{k\geq 3}p_k = 2\sum_{k\geq 2}p_{2k}= 2\sum_{k\geq 2}r^k= 2\frac{r^2}{1-r}<1
\]
We have to choose $p_1$ and $p_2$ such that $\sum p_k=1$. Then, it is enough to take
\[
 p_1=p_2=\frac{1}{2}\left(1-2\frac{r^2}{1-r}\right).
\]

We continue by showing that  $\log\|A\|\in L^1(\mu)$. This is an easy computation,
\[
 \int_{M}\log\|A\|d\mu = 2p_2\log\sigma_1+2\sum_{k\geq 2}p_{2k}\log\sigma_k= 2p_2l+2\sum_{k\geq 2}s^k.
\]
Since $0<s<1$ this geometric series is convergent. More over, since $p_{2k-1}=p_{2k}$ for all $k$ then $\lambda_{+}(q)=0$.

What is left is to construct the sequence $q_n$. Fix $n_0>1$ large enough so $\frac{1}{2}\left(1-2\frac{r^2}{1-r}\right)>l^{-n}$ for all $n\geq n_0$, and consider $q_n=\sum_kq^n_k\delta_{\alpha(k)}$ where for $n\geq n_0$
\begin{eqnarray*}
q^n_{2n} &=& l^{-n}+r^n,\\
q^n_2 &=& \frac{1}{2}\left(1-\frac{r^2}{1-r}\right)-l^{-n}\\
q^n_k &=& p_k\text{ other case.}
\end{eqnarray*}
Thus, since $q^n_k\to p_k$ when $n\to\infty$ for all $k$, it is easy to see that $q_n$ converges in the weak* topology to $q$.

The proof is completed by showing that $\lambda_+(q_n)\geq 1$ for $n$ large enough. It follows easily since,
\[
 \lambda_+(q_n)=|q^n_{2n-1}-q^n_{2n}|\log\sigma_n+|q^n_1-q^n_2|\log\sigma_1=l^{-n}l^n+l^{-n+1},
\]
which is equal to $1+l^{-n+1}\geq 1$ for all $n\geq n_0$.
\end{proof}

We now consider the Wasserstein topology in $P_1(\SL)$ which is stronger than the weak* topology,
as stated in Definition~\ref{def.Wconv}.
The advantage of using this topology is that all probability measures in $P_1(\SL)$ have finite moment of order 1,
and so their Lyapunov exponents always exist.
This observation is a direct consequence of the fact that $\log:[1,\infty)\to\R$ is a 1-Lipschitz function and
$\|\alpha\|\geq 1$ for every matrix $\alpha\in\SL$, because
\[
 \int\log\|A(x)\|d\mu=\int\log\|\alpha\|dp\leq \int d(\alpha,\id)dp<\infty.
\]
The convergence of the moments of order 1, allow us to control the weight of integrals outside compact sets,
and prove semicontinuity of the Lyapunov exponents in $P_1(\SL)$. We are thus led to the following result:

\begin{theorem}\label{th.SemicontWass}
The function defined on $P_1(\SL)$ by $p\to\lambda_{+}(p)$ is upper semi-continuous. The same remains valid for the function $p\to\lambda_-(p)$ with lower semi-continuity.
\end{theorem}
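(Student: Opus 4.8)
The plan is to show upper semi-continuity of $p\mapsto\lambda_+(p)$ on $P_1(\SL)$ directly from the subadditive description of the exponent together with the characterization of Wasserstein convergence in Proposition~\ref{prop.Wconv}. Recall that for $\mu=p^{\Z}$ one has, by Kingman's subadditive ergodic theorem (or simply by Fekete's lemma, since the $\alpha_i$ are i.i.d.\ under $\mu$),
\[
 \lambda_+(p)=\inf_{n\geq 1}\frac1n\int_M\log\|A^n(x)\|\,d\mu(x)=\inf_{n\geq 1}\frac1n\int_{\SL^n}\log\|\alpha_{n-1}\cdots\alpha_0\|\,dp(\alpha_0)\cdots dp(\alpha_{n-1}).
\]
Since an infimum of upper semi-continuous functions is upper semi-continuous, it suffices to prove that for each fixed $n$ the map
\[
 p\longmapsto \Phi_n(p):=\int_{\SL^n}\log\|\alpha_{n-1}\cdots\alpha_0\|\,dp^{\otimes n}
\]
is continuous (indeed continuous, not merely u.s.c.) on $P_1(\SL)$; taking the infimum then yields the claim for $\lambda_+$, and the statement for $\lambda_-$ follows because $\lambda_-(p)=-\lambda_+(p^{-1})$ where $p^{-1}$ is the pushforward of $p$ under $\alpha\mapsto\alpha^{-1}$, which is a $W_1$-isometry, so lower semi-continuity of $\lambda_-$ is equivalent to upper semi-continuity of $\lambda_+$.

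The key estimate is that the function $g_n(\alpha_0,\dots,\alpha_{n-1})=\log\|\alpha_{n-1}\cdots\alpha_0\|$ has at most linear growth in the $d(\cdot,\id)$-distance on $\SL^n$: using submultiplicativity of the norm and the fact (already noted in the excerpt) that $\log\|\alpha\|\leq d(\alpha,\id)$ for $\alpha\in\SL$, we get
\[
 0\leq g_n(\alpha_0,\dots,\alpha_{n-1})\leq \sum_{i=0}^{n-1}\log\|\alpha_i\|\leq \sum_{i=0}^{n-1} d(\alpha_i,\id)\leq C_n\bigl(1+\max_i d(\alpha_i,\id)\bigr).
\]
Now if $p_k\overset{W}{\longrightarrow}p$ in $P_1(\SL)$, then $p_k^{\otimes n}\overset{W}{\longrightarrow}p^{\otimes n}$ in $P_1(\SL^n)$ (product of $W_1$-convergent sequences converges in $W_1$ on the product, with the sum metric — this follows from item (2) of Proposition~\ref{prop.Wconv}, since weak* convergence passes to products and the first moments add up). Since $g_n$ is continuous and dominated by $C_n(1+d(\cdot,\id_{\SL^n}))$, criterion (5) of Proposition~\ref{prop.Wconv} applied on $P_1(\SL^n)$ gives $\Phi_n(p_k)=\int g_n\,dp_k^{\otimes n}\to\int g_n\,dp^{\otimes n}=\Phi_n(p)$. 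This establishes continuity of each $\Phi_n$, hence upper semi-continuity of $\lambda_+=\inf_n \tfrac1n\Phi_n$.

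The main obstacle is the verification that tensor products behave well under Wasserstein convergence, i.e.\ that $p_k\overset{W}{\to}p$ implies $p_k^{\otimes n}\overset{W}{\to}p^{\otimes n}$: weak* convergence of products of weak*-convergent measures is standard, but one must also check the convergence of first moments on $\SL^n$, and for this one uses that $d_{\SL^n}\bigl((\alpha_i),(\id)\bigr)=\sum_i d(\alpha_i,\id)$ so $\int d_{\SL^n}((\alpha_i),(\id))\,dp_k^{\otimes n}=n\int d(\alpha,\id)\,dp_k\to n\int d(\alpha,\id)\,dp$, which is exactly condition (2) of Proposition~\ref{prop.Wconv}. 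One must also be slightly careful that the continuous function $g_n$ really satisfies a bound of the form $|g_n|\le C(1+d(x_0,x))$ on $\SL^n$ with a single constant $C=C_n$; the display above does this with $x_0=\id_{\SL^n}$. A secondary point worth spelling out is the reduction $\lambda_-(p)=-\lambda_+(p^{-1})$ and the fact that $\alpha\mapsto\alpha^{-1}$ preserves $P_1(\SL)$ and is an isometry for $d$ restricted to $\SL$ (since $\|\alpha^{-1}\|=\|\alpha\|$ for $\alpha\in\SL$, so $d(\alpha^{-1},\id)=d(\alpha,\id)$ up to the choice of metric on $\SL$), which transfers the semi-continuity statement cleanly; if the chosen metric on $\SL$ is not exactly inversion-invariant one argues instead that inversion is bi-Lipschitz on $P_1$-bounded sets, which still suffices.
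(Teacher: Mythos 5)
Your proof is correct, but it follows a genuinely different route from the paper. The paper proves upper semi-continuity through Furstenberg's formula: it writes $\lambda_+(p_k)=\int\Phi\,dp_k\,d\eta_k$ for maximizing $p_k$-stationary measures $\eta_k$ on $\P$, extracts a weak* limit $\eta$ (compactness of $\P$ plus Proposition~\ref{prop.ClosenessStat}), and then shows $\int\Phi\,dp_k\,d\eta_k\to\int\Phi\,dp\,d\eta\le\lambda_+(p)$ by a careful splitting into a compact block (uniform continuity of $\Phi$ on $K\times\P$, a finite $\delta$-net, and a Urysohn cutoff to compare $dp_k$ with $dp$) and tail terms controlled by the uniform integrability encoded in Proposition~\ref{prop.Wconv}. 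You instead use the subadditive characterization $\lambda_+(p)=\inf_n\frac1n\int\log\|\alpha_{n-1}\cdots\alpha_0\|\,dp^{\otimes n}$ and prove each finite-$n$ functional is $W_1$-continuous via criterion (5) of Proposition~\ref{prop.Wconv} on $\SL^n$, after checking that $p_k\overset{W}{\to}p$ implies $p_k^{\otimes n}\overset{W}{\to}p^{\otimes n}$ (with the sum metric, so the first moments literally add); semi-continuity then falls out as an infimum of continuous functions. Your argument is shorter, avoids stationary measures entirely, and generalizes verbatim to higher dimensions, while the paper's argument yields the extra information that the limiting value of $\lambda_+(p_k)$ is realized as $\int\Phi\,dp\,d\eta$ for a limit $p$-stationary measure $\eta$, which is the natural starting point for studying continuity rather than mere semicontinuity. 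Two small points to tidy up: fix the sum metric on $\SL^n$ throughout (the intermediate bound through $\max_i d(\alpha_i,\id)$ is an unnecessary detour), and note that identifying $\inf_n\frac1n\int\log\|A^n\|\,d\mu$ with the almost-sure exponent uses the $L^1$ (expectation) part of Kingman, not just Fekete; also, for $\lambda_-$ you could simply invoke $\lambda_-(p)=-\lambda_+(p)$, valid here because $\det=1$ forces the exponents to sum to zero, which is even more direct than the inversion pushforward (though your adjugate/isometry argument is also sound for $\SL$).
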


Before beginning the proof of Theorem \ref{th.SemicontWass} we need to recalled some important results regarding the relationship between Lyapunov exponents and stationary measures.

A probability measure $\eta$ on $\P$ is called a p-\emph{stationary} if
\[
 \eta(E)=\int\eta(\alpha^{-1}E)dp(\alpha),
\]
for every measurable set $E\in\P$ and $\alpha^{-1}E=\{[\alpha^{-1}v]:[v]\in E\}$.

Roughly speaking, the following result shows that the set of stationary measures for a measure $p$ is close for the weak* topology.

\begin{proposition}\label{prop.ClosenessStat}
Let $(p_k)_k$ be probability measures in $\SL$ converging to $p$ in the weak* topology. For each $k$, let $\eta_k$ be $p_k$-stationary measures and $\eta_k$ converges to $\eta$ in the weak* topology. Then $\eta$ is a stationary measure for $p$.
\end{proposition}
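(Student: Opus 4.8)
The plan is to prove Proposition \ref{prop.ClosenessStat} by a standard weak* limit argument applied to the defining relation of stationarity, with the only real care needed being the justification that one can pass to the limit in an integral that depends jointly on the measure $p_k$ and the test function. Concretely, fix a continuous function $\varphi\colon\P\to\R$; since $\P$ is compact, $\varphi$ is bounded and uniformly continuous. Stationarity of $\eta_k$ with respect to $p_k$ is equivalent (by approximating indicator functions, or simply by the dual formulation) to the identity
\[
 \int_{\P}\varphi\,d\eta_k=\int_{\SL}\int_{\P}\varphi([\alpha v])\,d\eta_k([v])\,dp_k(\alpha),
\]
for every continuous $\varphi$. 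The left-hand side converges to $\int\varphi\,d\eta$ because $\eta_k\overset{\ast}{\longrightarrow}\eta$. The goal is to show the right-hand side converges to $\int_{\SL}\int_{\P}\varphi([\alpha v])\,d\eta([v])\,dp(\alpha)$, which is exactly the statement that $\eta$ is $p$-stationary.

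First I would handle the inner integral. Define $\Phi_k(\alpha)=\int_{\P}\varphi([\alpha v])\,d\eta_k([v])$ and $\Phi(\alpha)=\int_{\P}\varphi([\alpha v])\,d\eta([v])$. For each fixed $\alpha\in\SL$, the map $[v]\mapsto\varphi([\alpha v])$ is continuous on $\P$, so $\eta_k\overset{\ast}{\longrightarrow}\eta$ gives $\Phi_k(\alpha)\to\Phi(\alpha)$ pointwise. Moreover each $\Phi_k$ is continuous in $\alpha$ (the action $\SL\times\P\to\P$ is continuous and $\varphi$ is bounded, so dominated convergence applies), and the family $\{\Phi_k\}$ is uniformly bounded by $\sup|\varphi|$. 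The next step is to upgrade pointwise convergence to something strong enough to integrate against the varying measures $p_k$: I would show the family $\{\Phi_k\}$ is equicontinuous. This follows from uniform continuity of $\varphi$ together with the fact that $(\alpha,[v])\mapsto[\alpha v]$ is uniformly continuous on $K\times\P$ for any compact $K\subset\SL$ — with a modulus of continuity independent of $k$ — so on each compact set the $\Phi_k$ converge uniformly by Arzelà–Ascoli.

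Then I would split $\int\Phi_k\,dp_k-\int\Phi\,dp$ as $\int(\Phi_k-\Phi)\,dp_k+\int\Phi\,d(p_k-p)$. The second term tends to $0$ because $\Phi$ is a bounded continuous function and $p_k\overset{\ast}{\longrightarrow}p$. For the first term: given $\varepsilon>0$, use tightness of the weak* convergent sequence $(p_k)$ to find a compact $K\subset\SL$ with $p_k(\SL\setminus K)<\varepsilon$ for all $k$; on $K$ we have $\sup_K|\Phi_k-\Phi|\to0$ by the uniform convergence just established, and off $K$ the integrand is bounded by $2\sup|\varphi|$, so $\big|\int(\Phi_k-\Phi)\,dp_k\big|\le \sup_K|\Phi_k-\Phi|+2\sup|\varphi|\,\varepsilon$, which is $<(2\sup|\varphi|+1)\varepsilon$ for $k$ large. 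Letting $\varepsilon\to0$ finishes the passage to the limit, and since $\varphi$ was an arbitrary continuous function, $\eta$ is $p$-stationary.

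The main obstacle, and essentially the only nontrivial point, is controlling $\int(\Phi_k-\Phi)\,dp_k$: the measures $p_k$ vary, so pointwise convergence $\Phi_k\to\Phi$ is not by itself sufficient, and one genuinely needs the combination of (i) tightness of $(p_k)$ (automatic for a weak* convergent sequence on the Polish space $\SL$ by Prokhorov) to confine mass to a compact set, and (ii) local uniform convergence $\Phi_k\to\Phi$, which comes from equicontinuity via the uniform continuity of the projective action on compacta. Everything else is bookkeeping with bounded continuous functions on the compact space $\P$.
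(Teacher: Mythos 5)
Your argument is correct and complete. Note that the paper states Proposition \ref{prop.ClosenessStat} without any proof, invoking it as a known fact in the spirit of the compact-support theory (cf.\ \cite{LLE}), so there is no in-text argument to compare yours against; what you wrote supplies exactly the missing content. The two ingredients that genuinely go beyond the classical compact-support statement are the ones you isolate: (i) uniform tightness of the weak*-convergent sequence $(p_k)_k$ on the Polish space $\SL$, and (ii) local uniform convergence of $\Phi_k(\alpha)=\int_{\P}\varphi([\alpha v])\,d\eta_k([v])$ to $\Phi(\alpha)$, obtained from equicontinuity of $\{\Phi_k\}$ on compacta via uniform continuity of the projective action on $K\times\P$; together these control $\int(\Phi_k-\Phi)\,dp_k$ even though the measures vary, while $\int\Phi\,d(p_k-p)\to0$ is immediate since $\Phi$ is bounded and continuous. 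An equivalent, slightly shorter packaging would be to show first that $p_k\times\eta_k\to p\times\eta$ in the weak* topology on $\SL\times\P$ (proved by the same tightness-plus-compacta argument) and then test against the bounded continuous function $(\alpha,[v])\mapsto\varphi([\alpha v])$; but that is a rearrangement of your proof rather than a different one, so I see no gap to report.
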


Furthermore, in our context it is well-known that
\[
 \lambda_+(p)=\max\left\{\int\Phi dp\times\eta:\eta\hspace{2mm}p-\text{stationary}\right\},
\]
where $\Phi:\SL\times\P\to\R$ is given by
\[
 \Phi(\alpha,[v])=\log\frac{\|\alpha v\|}{\|v\|}.
\]
For more details see for example \cite[Proposition ~6.7]{LLE}.

We now proceed to the proof of Theorem \ref{th.SemicontWass}.

\begin{proof}[Proof of Theorem \ref{th.SemicontWass}]
We will prove that $\lambda_+(p)$ is upper semi-continuous. The case of $\lambda_-(p)$ is analogous.

Let $(p_k)_k$ be a sequence in the Wasserstein space $P_1(M)$ converging to $p$, i.e. $W(p_k,p)\to0$. For each $k\in\N$ let $\eta_k$ a stationary measure that realizes the maximum in the identity above. That is:
\[
 \lambda_+(p_k)=\int\Phi dp_k d\eta_k.
\]
Since $\P$ is compact, passing to a subsequence if necessary, we can suppose $\eta_k$ converges in the weak* topology to a measure $\eta$ which, as established in Proposition \ref{prop.ClosenessStat}, is a $p$-stationary measure.

Let $\epsilon>0$, we want to prove that there exist a constant $k_0\in\N$ such that for each $k>k_0$
\[
 \left|\int\Phi dp_k d\eta_k-\int\Phi dp d\eta\right|<\epsilon.
\]

In order to do this we need to consider some properties of the Wasserstein topology. First of all, since the first moment of $p$ is finite there exist $K_1$ a compact set of $\SL$ such that
\begin{equation}\label{eqn.FirstMoment}
 \int_{K_1^c}d(\alpha,\id)dp <\frac{\epsilon}{36}.
\end{equation}

Moreover, according to Definition~\ref{def.Wconv}, since $p_k\overset{W}{\longrightarrow}p$ there exist $R'>0$ satisfying
\[
 \limsup_k \int_{d(\alpha,\id)>R'}d(\alpha,\id)dp_k<\frac{\epsilon}{36},
\]
then, there exist $k'>0$ such that  for every $k>k'$
\begin{equation}\label{eqn.Limsup}
 \int_{d(\alpha,\id)>R'}d(\alpha,\id)dp_k<\frac{\epsilon}{36}.
\end{equation}
Take $R>0$ big enough so $B(\id,R')\cup K_1\subset B(\id,R)$ and define the compact set $K=\bar{B}(\id,R)$.
Since the function $\log:[1,\infty)\to\R$ is 1-Lipschitz and $\|\alpha\|\geq1$ for all $\alpha\in\SL$, then
\begin{equation}
 |\Phi(\alpha,[v])|=\left|\log\frac{\|\alpha v\|}{\|v\|}\right|\leq \log\|\alpha\|\leq |\|\alpha\|-\|\id\||\leq d(\alpha,\id).
\end{equation}

Our proof starts with the observation that
\begin{align*}
 &\left|\int\Phi dp_k d\eta_k-\int\Phi dp d\eta\right|\\
 &\leq \left|\int_{K\times\P}\Phi dp_k d\eta_k-\int_{K\times\P}\Phi dp d\eta\right|+\left|\int_{K^c\times\P}\Phi dp_k d\eta_k\right|+\left|\int_{K^c\times\P}\Phi dp d\eta\right|.\\
\end{align*}
From \eqref{eqn.Limsup} it follows that
\begin{equation}\label{eqn.Integral1}
 \left|\int_{K^c\times\P}\Phi dp_k d\eta_k\right|\leq\int_{K^c}d(\alpha,\id) dp_k<\frac{\epsilon}{3}.
\end{equation}
Furthermore, \eqref{eqn.FirstMoment} implies that
\begin{equation}\label{eqn.Integral2}
 \left|\int_{K^c\times\P}\Phi dp d\eta\right|\leq\int_{K^c}d(\alpha,\id) dp<\frac{\epsilon}{3}.
\end{equation}

We now proceed to analyze the integral:
\begin{align*}
 &\left|\int_{K\times\P}\Phi dp_k d\eta_k-\int_{K\times\P}\Phi dp d\eta\right|\\
 &\leq \left|\int_{K\times\P}\Phi dp_k d\eta_k-\int_{K\times\P}\Phi dp_k d\eta\right|+\left|\int_{K\times\P}\Phi dp_k d\eta-\int_{K\times\P}\Phi dp d\eta\right|.
\end{align*}
Consider $\Phi_K=\Phi|_{K\times\P}$ the restriction of $\Phi$ to the compact space $K\times\P$. Thus, $\Phi_K$ is uniformly continuous with the product metric. Hence, there exist $\delta=\delta(\epsilon)$ such that for every $[v]\in\P$ and every $\alpha,\beta\in K$ satisfying $d(\alpha,\beta)<\delta$ we have
\[
 \left|\Phi_K(\alpha,[v])-\Phi_K(\beta,[v])\right|<\frac{\epsilon}{18}.
\]

Moreover, by the compactness of the set $K$ we can find $\alpha_1,...,\alpha_N\in K$ such that $K\subset\cup_{i=1}^N B(\alpha_i,\delta)$. Therefore, the convergence of $(\eta_k)_k$ to $\eta$ in the weak* topology implies that for each $i=1,...,N$ there exist $k_i>0$ such that
\[
 \left|\int_{\P}\Phi_K(\alpha_i,[v]) d\eta_k-\int_{\P}\Phi_K(\alpha_i,[v])d\eta\right|<\frac{\epsilon}{18}.
\]
Take $k''=\max\{k_1,...,k_N\}$. From the above it follows that given $\alpha\in K$ there exist $i$ such that $d(\alpha,\alpha_i)<\delta$ and for every $k>k''$ if $\Phi_i=\Phi_K(\alpha_i,[v])$ then
\[
\begin{aligned}
& \left|\int_{\P}\Phi_K d\eta_k-\int_{\P}\Phi_K d\eta\right| \le \\
& \hspace*{1cm} \leq \int_{\P}\left|\Phi_K -\Phi_i\right| d\eta_k +
\left|\int_{\P}\Phi_i d\eta_k-\int_{\P}\Phi_i d\eta\right|+\int_{\P}\left|\Phi_i-\Phi_K\right|d\eta<\frac{\epsilon}{6}.
\end{aligned}
\]
Since this convergence is uniform in $\alpha$, this implies that
\begin{equation}\label{eqn.Integral31}
 \left|\int_{K^c\times\P}\Phi_K(\alpha,[v]) d\eta_k dp_k-\int_{K^c\times\P}\Phi_K(\alpha,[v])d\eta dp_k\right|<\frac{\epsilon}{6},
\end{equation}
for all $k>k''$.

Now define $L=\SL\backslash B(id,R+1)$ and, consider the Urysohn function $f_0:\SL\to[0,1]$ given by
\[
 f_0(\alpha)=\frac{d(\alpha,L)}{d(\alpha,L)+d(\alpha,K)}.
\]
It is easily seen that $f_0$ is continuous, equal to zero in $L$ and equal to 1 in $K$. Therefore, the functions
\[
 \psi(\alpha)=\int_{\P}\Phi(\alpha,[v])d\eta\cdot f_0(\alpha)
\]
are continuous. Then since $|\psi(\alpha)|\leq d(\alpha,id)$ and, by the definition of the Wasserstein topology, there exist $k'''$ such that for every $k>k'''$
\[
 \left|\int\psi dp_k-\int\psi dp\right|<\frac{\epsilon}{18}.
\]
Moreover, since $\psi-\varphi=0$ in $K$ and
\[
 |\psi-\varphi|\leq\log\|\alpha\||f_0(\alpha)-\chi_K(\alpha)|\leq 2d(\alpha,id).
\]
Hence, by (\ref{eqn.FirstMoment}) and (\ref{eqn.Limsup})
\begin{align*}
 &\int|\psi-\varphi|dp\leq 2\int_{K^c}d(\alpha,id)dp<\frac{\epsilon}{18},\\
 &\int|\psi-\varphi|dp_k\leq 2\int_{K^c}d(\alpha,id)dp_k<\frac{\epsilon}{18}
\end{align*}
for each $k>k'$. Thus, if $k>\max\{k',k'''\}$ we get
\begin{equation}
 \left|\int_{K\times\P}\Phi dp_k d\eta-\int_{K\times\P}\Phi dp d\eta\right|<\frac{\epsilon}{6}.
\end{equation}
Finally, taking $k_0=\max\{k',k'',k'''\}$, we conclude that for every $k>k_0$
\[
 \left|\int\Phi dp_k d\eta_k-\int\Phi dp d\eta\right|<\epsilon.
\]
We just proved that
\[
 \lambda_+(p_k)=\int\Phi dp_k d\eta_k\to\int\Phi dp d\eta\leq \lambda_+(p),
\]
which concludes our proof.
\end{proof}

\begin{remark}
Theorem \ref{th.NotSemicontWeakTop} does not contradicts Theorem \ref{th.SemicontWass},
since $p$ is not in $P_1(\SL)$. To see this, take $x_0=\id$ and note that
\[
\int d(x,x_0)dp=\sum_{k=0}^{\infty}p_k\|\alpha_k-\id\|=2\sum_{k=0}^{\infty}r^k(e^{l^k}-1),
\]
which diverges.
\end{remark}


\section{Proof of Theorem \ref{th.TheoremB}}

At this section we are going to describe a construction of points of discontinuity of the Lyapunov exponents as functions of the probability measure, relative to the Wasserstein topology.

\begin{theorem}\label{th.NotContWass}
There exist a measure $q$ and a sequence of measures $(q_n)_n$ on $\SL$ converging to $q$ in the Wasserstein topology, such that $\lambda_+(q_n)=0$ for all $n\in\N$ but $\lambda_{+}(q)>0$.
\end{theorem}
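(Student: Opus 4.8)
The plan is to produce the discontinuity through an ``eigenaxis exchange'' cancellation. I would take $q$ supported on diagonal matrices, with a non-zero drift so that $\lambda_+(q)>0$, and obtain $q_n$ by transferring a small mass onto a single \emph{antidiagonal} matrix, say $w=\bigl(\begin{smallmatrix}0&1\\-1&0\end{smallmatrix}\bigr)$. Concretely: fix a Borel probability $\hat q$ on $\R$ with $\bar t:=\int t\,d\hat q(t)\neq 0$ and $\int e^{|t|}\,d\hat q(t)<\infty$, taking $\hat q$ of non-compact support (say with geometric tails); let $q$ be the push-forward of $\hat q$ under $t\mapsto\diag(e^{t},e^{-t})$, so $q\in P_1(\SL)$; and set
\[
 q_n=\Big(1-\tfrac1n\Big)\,q+\tfrac1n\,\delta_w .
\]
Then $\supp(q_n)$ consists of diagonal matrices together with $w$, the mass $\tfrac1n$ on $w$ exchanging the coordinate axes $\langle e_1\rangle,\langle e_2\rangle$; since this exchange occurs with positive frequency it forces $\lambda_+(q_n)=0$, whereas $q$, being diagonal, keeps a strictly positive exponent. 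Note also $q_n\in P_1(\SL)$, so all the exponents involved exist.

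Two facts are immediate. A $q$-random product equals $\diag\!\big(e^{\sum t_i},e^{-\sum t_i}\big)$, so the strong law of large numbers gives $\lambda_+(q)=|\bar t|>0$. And $q_n\overset{W}{\longrightarrow}q$, because the coupling that keeps a fraction $1-\tfrac1n$ of the mass on the diagonal and sends the rest to $w$ costs $\tfrac1n\int d(w,\alpha)\,dq(\alpha)\to 0$, so $W_1(q_n,q)\to 0$. Everything thus reduces to the identity $\lambda_+(q_n)=0$ for every $n$.

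For this I would write a $q_n$-random product $g_N=M_N\cdots M_1$, each $M_i$ equal to $w$ with probability $\tfrac1n$ and to $\diag(e^{t_i},e^{-t_i})$ otherwise, and use the relations $w\,\diag(e^{t},e^{-t})\,w^{-1}=\diag(e^{-t},e^{t})$ and $w^2=-\id$. Pushing all the $w$'s to the left collapses $g_N$ to $\pm w^{\,r}\diag(e^{S_N},e^{-S_N})$, with $r=\#\{i\le N:M_i=w\}$, $\epsilon_i:=(-1)^{\#\{\,j<i:\,M_j=w\,\}}\in\{\pm1\}$ and $S_N=\sum_{i\le N}\epsilon_i t_i$ (setting $t_i:=0$ when $M_i=w$); in particular $\frac1N\log\|g_N\|=\frac1N|S_N|$. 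Now $(\epsilon_i)_i$ is a recurrent symmetric two-state Markov chain, flipping at each step with probability $\tfrac1n>0$, so its time-average tends to $0$; since in addition $t_i$ is independent of $\epsilon_i$, the ergodic theorem gives $\frac1N S_N\to 0$ almost surely, hence $\lambda_+(q_n)=0$. An alternative argument, matching the machinery recalled before Theorem~\ref{th.SemicontWass}, checks that $\eta_0=\tfrac12\big(\delta_{[e_1]}+\delta_{[e_2]}\big)$ is the unique $q_n$-stationary measure on $\P$ and that $\int\Phi\,dq_n\,d\eta_0=0$ — because $\Phi(\diag(e^t,e^{-t}),[e_1])=t=-\Phi(\diag(e^t,e^{-t}),[e_2])$ and $\Phi(w,\cdot)=0$ — so that the variational formula $\lambda_+(q_n)=\max_{\eta}\int\Phi\,dq_n\,d\eta$ together with $\lambda_+(q_n)+\lambda_-(q_n)=0$ forces $\lambda_+(q_n)=0$.

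The claim for $\lambda_-$ is then free: since $\det\equiv1$ one has $\lambda_-=-\lambda_+$, so $\lambda_-(q_n)=0$ for all $n$ while $\lambda_-(q)=-|\bar t|<0$; this is consistent with the lower semicontinuity of Theorem~\ref{th.SemicontWass}, exactly as the $\lambda_+$ part is consistent with its upper semicontinuity. The step I expect to demand the most care is the clean proof that $\lambda_+(q_n)=0$: on the dynamical side one must pass from convergence in mean of $\frac1N S_N$ to almost-sure convergence, which is routine (e.g. from the correlation bound $\mathbb{E}[\epsilon_i\epsilon_j]=(1-\tfrac2n)^{|i-j|}$, giving $\operatorname{Var}(S_N)=O(N)$, or by applying Birkhoff's theorem to the stationary extension of the chain); on the stationary-measure side one must establish uniqueness of $\eta_0$, which rests on the fact that every $q$-stationary measure is supported on the fixed-point set $\{[e_1],[e_2]\}$ of the diagonal torus (no stationary probability survives on the complementary arcs because $\bar t\neq 0$) and that averaging such a measure against $\delta_w$ symmetrizes it.
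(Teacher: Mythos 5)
Your construction is correct, but it follows a genuinely different route from the paper. The paper keeps the weights $p_k=e^{-\sqrt k}$ fixed and replaces the single atom $\alpha(n)=\diag(n,n^{-1})$, far out in the tail, by the rotation $B=\bigl(\begin{smallmatrix}0&-1\\1&0\end{smallmatrix}\bigr)$; Wasserstein convergence comes from $p_n\,d(\alpha(n),B)\sim n e^{-\sqrt n}\to0$, so the example exploits the non-compact support (a far-away atom with tiny mass), in line with the discussion after the theorem about supports drifting apart. The vanishing of $\lambda_+(q_n)$ is then proved by contradiction: the pair $\{H,V\}$ of coordinate axes is invariant, so the projective distance of the images stays equal to $1$, while irreducibility plus the Bougerol--Lacroix facts ($\lambda_++\lambda_-=\lim\frac1m\log\|\wedge^2A^m\|$ and growth of every vector at rate $\lambda_+$) would force that distance to decay exponentially if $\lambda_+>0$. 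You instead perturb the \emph{weights}, transferring mass $\tfrac1n$ onto one fixed rotation $w$ (a Kifer-type perturbation, which is in fact exactly the variant sketched in a commented-out remark of the source), and you prove $\lambda_+(q_n)=0$ directly: the relations $w\,\diag(e^t,e^{-t})\,w^{-1}=\diag(e^{-t},e^t)$, $w^2=-\id$ collapse the random product to $\pm w^r\diag(e^{S_N},e^{-S_N})$ with $S_N$ a signed sum whose signs form a symmetric flipping chain, so $\frac1N\log\|g_N\|=\frac1N|S_N|\to0$. Your computation is more elementary and self-contained (no irreducibility machinery, and it identifies the exponent exactly); the paper's argument is softer, reuses standard random-matrix theory, and its example works with the weights untouched, which is closer in spirit to the non-compact-support phenomenon the paper wants to isolate. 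Note also that your example would prove the statement even with compactly supported $q$; since the theorem does not require non-compact support, that is not a defect, and your choice of $\hat q$ with unbounded support keeps it in the paper's setting.

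Two small points of care. For the step $\frac1N S_N\to0$ you can bypass the almost-sure issue entirely: by Furstenberg--Kesten/Kingman, $\lambda_+(q_n)=\lim_N\frac1N\mathbb{E}\log\|g_N\|=\lim_N\frac1N\mathbb{E}|S_N|$, and your correlation bound already gives $\mathbb{E}|S_N|=O(\sqrt N)$, so no upgrade from convergence in mean is needed. Your alternative stationary-measure argument, as stated, is incomplete: the variational formula requires the maximum over \emph{all} $q_n$-stationary measures, and ruling out stationary mass on the open arcs for $q_n$ (where the drift is interrupted by the $w$-flips) needs a genuine argument, not just $\bar t\neq0$; since you flag it and your main argument stands alone, this does not affect the proof.
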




\begin{proof}

Consider the function $\alpha:\N\to\SL$ defined by the hyperbolic matrices
\[
\alpha(k) = \begin{pmatrix}
              k & 0\\
              0 & k^{-1}\\
            \end{pmatrix}
\]
Take $m\in\N$ the smallest natural number bigger than 1 such that $\sum_{n\geq m}e^{-\sqrt{n}}<1$, which exist since $\sum_k e^{-\sqrt{k}}$ is convergent, and define
\begin{align*}
 &p_k=e^{-\sqrt{k}},\text{ if }k\geq m,\\
 &p_1=1-\sum_{n\geq m}e^{-\sqrt{n}},\\
 &p_k=0,\text{ otherwise.}
\end{align*}

It is obvious from the definition that $\sum_k p_k=1$. Hence, we define the probability measure $q=\sum p_k\delta_{\alpha(k)}$. We need to see that $q\in P_1(\SL)$, in order to do so notice that if $x_0=\id$
\[
 \int d(x,x_0)dq=\sum_k p_k\|\alpha(k)-\id\|=\sum_ke^{-\sqrt{k}}(k-1)
\]
which is convergent by the Cauchy condensation test. Moreover, since
\[
 \sum_k e^{-\sqrt{k}}\log k<\sum_k ke^{-\sqrt{k}}
\]
then if $\mu=q^{\Z}$ we have $\log \|A\|\in L^1(\mu)$ and,
\[
 \lambda_+(q)=\sum_k e^{-\sqrt{k}}\log k>0.
\]
Now, consider $B=\begin{pmatrix} 0 & -1\\ 1 & 0\\ \end{pmatrix}$ and for each $n$ consider
\[
 \beta_n(k)=\left\lbrace\begin{array}{ll}
            \alpha(k) & \text{ if }k\neq n,\\
             B & \text{ if }k=n.\\
           \end{array}\right.
\]

With this we define the probability measures $q_n=\sum_k p_k\delta_{\beta_n(k)}$. In a similar way as above, we can see that for all $n$ these measures belong to $P_1(\SL)$ and, $\log \|A\|\in L^1(q_n)$. We proceed to show that $q_n$ converges to $q$ in the Wasserstein topology. This follows since
\[
 W(q_n,q)\leq p_nd(\alpha(n),B)\sim ne^{-\sqrt{n}}
\]
which goes to 0 if $n$ goes to $\infty$.

It remains to prove that $\lambda_{+}(q_n)=0$ for every $n$. In order to do this we proceed by contradiction. Suppose there exist $N$ such that $\lambda_{+}(q_N)>0>\lambda_{-}(q_N)$. We will consider the distance in the projective space $\P$ given by
\[
 \delta([v],[w]):=\frac{\|v\wedge w\|}{\|v\|\|w\|}=\sin(\angle(v,w)).
\]

Consider the family $\mathcal{F}=\{V,H\}$, where $V=[e_2]$ and $H=[e_1]$ are the vertical and horizontal axis respectively. By the definition of the measure $q_N$, it is clear that this family is invariant by every matrix in the support of this measure. Moreover, if $x=(x_k)_k\in M$ then we can see that for every $m$
\begin{equation}\label{eqn.HVdistance}
  \delta(A^m(x)H,A^m(x)V)\geq\delta(H,V)=1.
\end{equation}

On the other hand, we have for every unit vectors $v$ and $w$
\[
 \|A^m(x)v\wedge A^m(x)w\|\leq\|\wedge^2A^m(x)\|.
\]
It is widely known that, for $q_N$-almost every $x\in M$
\[
 \lambda_{+}(q_N)+\lambda_{-}(q_N)=\lim_{m}\frac{1}{m}\log\|\wedge^2A^m(x)\|.
\]

Note that $q_N$ is irreducible, this means that there is no proper subspace of $\R^2$ invariant under all the matrices in the support of $q_N$. Therefore, we have
\[
 \lambda_{+}(q_N)=\lim_{m}\frac{1}{m}\log\|A^m(x)w\|,
\]
for every unit vector $w$.
For a more detailed discussion of the two results mention above we refer the reader to \cite[Chapter~III]{BougerolLacroix}.

Thus, we have for every unit vectors $v$ and $w$
\[
 \lim_{m}\frac{1}{m}\log\frac{\|\wedge^2A^m(x)\|}{\|A^m(x)v\|\|A^m(x)w\|}=\lambda_{-}(q_N)-\lambda_{+}(q_N)<0,
\]
and hence
\begin{align*}
 \lim_{m}\delta(A^m(x)H,A^m(x)V)&\leq \lim_{m}\frac{\|\wedge^2A^m(x)\|}{\|A^m(x)e_1\|\|A^m(x)e_2\|}\\
   &=\lim_{m}\exp\left(m\cdot\frac{1}{m}\log \frac{\|\wedge^2A^m(x)\|}{\|A^m(x)e_1\|\|A^m(x)e_2\|}\right)=0,
\end{align*}

which is a contradiction with \eqref{eqn.HVdistance} and, we finish our proof.

\end{proof}

Notice that this example shows that the Wasserstein topology is not enough to guarantee continuity of the Lyapunov exponents. The main problem is that the support of the measures $q_n$  move further apart from the support of $q$. Thus, this suggest that we need to add some hypothesis guaranteeing the ``convergence'' of the supports. An assumption of this type was made by Bocker, Viana in \cite{BockerViana} in order to prove the continuity for measures with compact support. However in the non compact setting we will also need to control the iteratives of the cocycles. Those hypotheses are very restrictive so we are left with the following conjecture:

\begin{conjecture}
 The Lyapunov exponent functions $\lambda_\pm:P_1(\SL)\to\R$ are not continuous with respect to the Wasserstein--Hausdorff topology.
\end{conjecture}

In the next two sections we are going to describe a construction of points of discontinuity of the Lyapunov exponents as functions of the measure, relative to the Wasserstein topology. However, in each of them the support of the measures are arbitrarily close. These constructions were inspired by the discontinuity example presented by Bocker Viana in \cite[Section~7.1]{BockerViana}, and do not prove our conjecture since they are not examples in $\SL$.

\subsection{Discontinuity example in $\SL^5$}\label{sec.DiscSL5}

Let us recall that $M=(\SL)^{\Z}$, $f:M\to M$  is the shift map over $M$ defined by
\[
 (\alpha_n)_n\mapsto (\alpha_{n+1})_n.
\]
And the linear cocycle $A$ is the product of random matrices which is defined by
\[
 A:M\to\SL,\quad(\alpha_n)_n\mapsto \alpha_0.
\]
Given an invariant measure $p$ in $\SL$ we can define $\mu=p^{\Z}$ which is an invariant measure in $M$.

Now consider $X=\SL^5$ with the product metric
\[
 d_{\infty}((\alpha_1,...,\alpha_5),(\beta_1,...,\beta_5))=\max\{d(\alpha_1,\beta_1),...,d(\alpha_5,\beta_5)\}.
\]
Let $N=X^{\Z}$ be the space of sequences over $X$ and $g:N\to N$ the shift map over $N$. We can identify $N$ with $M$ using the function $\iota:M\to N$ by $\iota((\alpha_n)_n)=(\beta_n)_n$ where
\[
 \beta_n=(\alpha_{5n},\alpha_{5n+1},\alpha_{5n+2},\alpha_{5n+3},\alpha_{5n+4})_n.
\]

It is easy to see that $\iota$ defines a bijection between $N$ and $M$. Moreover, we have the following identity
\[
 g(\iota((\alpha_n)_n))=f^5((\alpha_n)_n).
\]
Also we can consider the linear cocycle induced by $A$ in $N$, that is the function $B:N\to \SL$ given by
\[
 B((\iota((\alpha_n)_n))=A^5((\alpha_n)_n).
\]
So in this context we have the following result.

\begin{theorem}
There exist a measure $q$ and a sequence of measures $(q_n)_n$ on $X$ converging to $q$ in the Wasserstein topology, such that
\[
 \lambda_+(B,q_n)\nrightarrow\lambda_+(B,q).
\]
\end{theorem}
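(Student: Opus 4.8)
The plan is to transfer everything through the identification $\iota$ and then reproduce the mechanism of the proof of Theorem~\ref{th.NotContWass}, the new ingredient being that the block cocycle $B$ is wildly non‑uniformly continuous on the non‑compact space $X=\SL^5$. Since $B(\iota((\alpha_n)_n))=\alpha_4\cdots\alpha_0$ depends only on the zeroth $X$‑coordinate of $\iota((\alpha_n)_n)$, the map $B$ is a locally constant $\SL$‑valued cocycle over the full shift on $X$; hence for $q\in P_1(X)$ with $\log^+\norm{B}\in L^1(q^{\Z})$ the number $\lambda_+(B,q):=\lambda_+(B,q^{\Z})$ is the top Lyapunov exponent of the i.i.d.\ random product of $2\times2$ matrices with law $B_*q$, where $B(s)=s^{(5)}s^{(4)}s^{(3)}s^{(2)}s^{(1)}$ for $s=(s^{(1)},\dots,s^{(5)})\in X$. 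The point of the fivefold blocking is that a super‑symbol may contain large matrices that cancel inside $B(s)$, so that an arbitrarily small move of $s$ produces an arbitrarily large move of $B(s)$; this lets us keep the measures close in $P_1(X)$ while making the induced random products behave completely differently. Write $D_t=\diag(t,t^{-1})$ and $e=(\id,\dots,\id)\in X$.

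For the limit measure I take
\[
q=\sum_{k\ge1}p_k\,\delta_{s_k},\qquad s_k=\bigl(D_{\ell_k}^{-1},\,D_{\ell_k}^{-1},\,D_\tau,\,D_{\ell_k},\,D_{\ell_k}\bigr),
\]
with $\tau>1$ fixed, $\ell_k\uparrow\infty$, $p_k>0$, $\sum_kp_k=1$ and $\sum_kp_k\ell_k<\infty$ (e.g.\ $\ell_k=2^k$, $p_k\propto3^{-k}$). Then $\int d_\infty(s,e)\,dq(s)=\sum_kp_k(\ell_k-1)<\infty$, so $q\in P_1(X)$, and $\int\log^+\norm{B}\,dq=\log\tau<\infty$. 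All five factors of $s_k$ are diagonal, hence commute, so $B(s_k)=D_{\ell_k}^{2}D_\tau D_{\ell_k}^{-2}=D_\tau$ for every $k$; the random product is thus deterministically a product of copies of $D_\tau$, and $\lambda_+(B,q)=\log\tau>0$. For the approximating measures I set $q_n=q-p_n\,\delta_{s_n}+p_n\,\delta_{\tilde s_n}$, where $\tilde s_n$ is $s_n$ with its middle coordinate $D_\tau$ replaced by $W:=\bigl(\begin{smallmatrix}0&-1\\1&0\end{smallmatrix}\bigr)\in\SL$. Then
\[
B(\tilde s_n)=D_{\ell_n}^{2}\,W\,D_{\ell_n}^{-2}=\begin{pmatrix}0&-\ell_n^{4}\\ \ell_n^{-4}&0\end{pmatrix}=:W_n ,
\]
which is antidiagonal, so it interchanges the points $[e_1],[e_2]$, while $D_\tau$ fixes each of them (and has no other invariant line) and $W_n$ has no invariant line at all. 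Thus $B_*q_n=(1-p_n)\delta_{D_\tau}+p_n\delta_{W_n}$ is irreducible (no common invariant line) and leaves the finite family $\mathcal{F}=\{[e_1],[e_2]\}$ invariant, and $\int\log^+\norm{B}\,dq_n=(1-p_n)\log\tau+4p_n\log\ell_n<\infty$. Now the contraction‑versus‑separation argument from the proof of Theorem~\ref{th.NotContWass} applies verbatim to the random product $(S_m)_m$ with law $B_*q_n$: if $\lambda_+(B,q_n)>0>\lambda_-(B,q_n)$, then by irreducibility $\norm{S_me_i}=e^{\lambda_+(B,q_n)m+o(m)}$ for $i=1,2$ along a.e.\ orbit, so
\[
\delta(S_m[e_1],S_m[e_2])=\frac{\abs{\det S_m}}{\norm{S_me_1}\,\norm{S_me_2}}\longrightarrow 0,
\]
contradicting $\delta(S_m[e_1],S_m[e_2])=\delta([e_1],[e_2])=1$, since each $S_m$ permutes $\mathcal{F}$. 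Hence $\lambda_+(B,q_n)=0$ for every $n$.

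It remains to observe that $W_1(q_n,q)\le p_n\,d_\infty(\tilde s_n,s_n)=p_n\,\norm{D_\tau-W}\to0$ (the two large factors $D_{\ell_n}^{\pm1}$ are untouched), so $q_n\overset{W}{\longrightarrow}q$ in $P_1(X)$; together with $\lambda_+(B,q)=\log\tau>0=\lambda_+(B,q_n)$ this proves the theorem. Moreover $\supp q_n$ and $\supp q$ differ only by the replacement of $s_n$ with $\tilde s_n$, and $d_\infty(\tilde s_n,s_n)=\norm{D_\tau-W}$ is a constant independent of $n$, so the supports stay uniformly close — in sharp contrast with the example of Theorem~\ref{th.NotContWass}, where they escape to infinity. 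As expected from Theorem~\ref{th.SemicontWass}, the pushforwards do not converge in $P_1(\SL)$: the first moment of $B_*q_n$ is $\approx 1+p_n\ell_n^{4}$, which one can make diverge.

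The step needing the most care is confirming that the argument of Theorem~\ref{th.NotContWass} genuinely transfers — namely that for the irreducible (but not strongly irreducible) law $B_*q_n$ every nonzero vector realizes the top exponent, so that, equivalently via the variational formula $\lambda_+(p)=\max_\eta\int\Phi\,dp\,d\eta$ together with the explicit $p_n$‑stationary measure $\tfrac12(\delta_{[e_1]}+\delta_{[e_2]})$, one gets $\lambda_+(B,q_n)=0$. Everything else (positivity of $\lambda_+(B,q)$, the Wasserstein convergence via Proposition~\ref{prop.Wconv}, the integrability conditions) is immediate from the construction. If one wants the Hausdorff distance between the supports to tend to $0$, one can instead spread the perturbation of $s_n$ over several coordinates so as to exploit the amplification $D_{\ell_n}^{\pm1}(\cdot)D_{\ell_n}^{\mp1}$ and solve $B(\tilde s_n)=W_n$ with $d_\infty(\tilde s_n,s_n)=O(\ell_n^{-1})$; balancing the error terms in that nonlinear equation is a routine but slightly delicate computation, and it is the only genuinely technical point of the proof.
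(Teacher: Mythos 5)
Your construction is correct and does prove the theorem as stated, but it takes a genuinely different route from the paper's, and the difference matters for what this section is meant to exhibit. You put all the largeness inside the super-symbols so that $B_*q=\delta_{D_\tau}$ with $\lambda_+(B,q)=\log\tau$, and then move a single atom of vanishing weight $p_n$ by the \emph{fixed} distance $\|D_\tau-W\|$; the vanishing of $\lambda_+(B,q_n)$ for $B_*q_n=(1-p_n)\delta_{D_\tau}+p_n\delta_{W_n}$ (invariant pair $\{[e_1],[e_2]\}$ swapped with positive probability, irreducibility, then the angle-versus-growth contradiction) is verbatim the argument of Theorem~\ref{th.NotContWass}, and the integrability and Wasserstein estimates are immediate. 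This is essentially a Kiefer-type example — the discontinuity is produced by an atom whose \emph{weight} tends to $0$ while it jumps a bounded distance — and the fivefold blocking does no real work in your argument (the same measures with $s_k=(\id,\id,D_\tau,\id,\id)$ would serve). The paper's construction is more elaborate precisely because it is after the stronger feature announced in the preamble: the supports of $q_n$ and $q$ are to be arbitrarily close. There the modified atom $\beta(w_n)$ differs from $\alpha(w_n)$ by shears and a rotation of size $O(n^{-2})$, so $d_\infty(\alpha(w_n),\beta(w_n))\lesssim n^{-1}\to 0$, and the large diagonal entries amplify these tiny perturbations into an exactly antidiagonal product on $Z_n$ (Lemma~\ref{l.HV}); that amplification is the mechanism you describe in your opening paragraph but then never use. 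Your closing suggestion — ``spread the perturbation'' over several coordinates to solve $B(\tilde s_n)=W_n$ with $d_\infty(\tilde s_n,s_n)=O(\ell_n^{-1})$ — is exactly the nontrivial content of the paper's choice of $w_n$, $R_\epsilon$, $R_\delta$, so labelling it routine and omitting it means your proof, while valid for the literal statement, does not recover the ``supports arbitrarily close'' phenomenon the example is designed for. (Minor aside: invoking Theorem~\ref{th.SemicontWass} to ``expect'' that $B_*q_n\not\to B_*q$ in $P_1(\SL)$ is unnecessary — a downward jump of $\lambda_+$ would not contradict upper semicontinuity even if the pushforwards converged.)
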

The main idea of the proof is to construct a measure on $N$ whose Lyapunov exponents are positive and approximate it, in the Wasserstein topology, by measures with zero Lyapunov exponents. In order to do that, define the function $\alpha:\N\to\SL$ as
\[
\alpha(2k-1) = \begin{pmatrix}
                  k & 0\\
                  0 & k^{-1}\\
                 \end{pmatrix}
\quand
\alpha(2k) = \begin{pmatrix}
                  k^{-1} & 0\\
                  0 & k\\
               \end{pmatrix}
\]
As in the example before take $m\in\N$ the smallest natural (odd) number bigger than 3 such that $\sum_{k\geq m}e^{-\sqrt{k}}<1$, which exist since $\sum_k e^{-\sqrt{k}}$ is convergent, and define
\begin{align*}
 &p_{2k}=p_{2k-1}=\frac{1}{2} e^{-\sqrt{k}},\text{ if }2k-1\geq m,\\
 &p_3=1-\sum_{n\geq m}e^{-\sqrt{n}},\\
 &p_k=0,\text{ otherwise.}
\end{align*}
Let $\mu=\tq^{\Z}$ be a measure in $M$ where $\tq$ is the measure on $\SL$ given by
\[
 \tq=\sum_{k\in \N}p_k\delta_{\alpha(k)}.
\]
Let us consider the space $\Omega=\N^5$ and define the measure on $X$ by
\[
 q=\sum_{w\in\Omega}p_{w}\delta_{\alpha(w)},
\]
where $\alpha(w)=(\alpha(w_1),\cdots,\alpha(w_5))$ and, $p_w=p_{w_1}\cdots p_{w_5}$ if $w=(w_1,...,w_5)$.

Now consider the measure $\nu=q^{\Z}$ on $N$. First, we need to ensure that the measure $q$ belong to $P_1(X)$. This is a direct consequence of the fact that $\sum e^{-\sqrt{n}}(n-1)$ is convergent equal to some positive constant $c$. Indeed, if $\alpha_0=(\id,...,\id)$ and the notation $p_1\cdots\hp_i\cdots p_5$ denotes the product of $p_1$ through $p_5$ except $p_i$ then
\begin{align*}
\int d_{\infty}(\alpha,\alpha_0)dq &=\sum_w p_wd_{\infty}(\alpha(w),\alpha_0)\\
     &<\sum_{i=1}^5\sum_{w_j,j\neq i} p_{w_1}\cdots\hp_{w_i}\cdots p_{w_5}\left(\sum_{w_i}p_{w_i}d(\alpha(w_i),\id)\right)\\
     &<c\sum_{i=1}^5\sum_{w_j,j\neq i} p_{w_1}\cdots\hp_{w_i}\cdots p_{w_5}
     =5c
\end{align*}
which proves our claim. Remember that this also guarantees the existence of $\lambda_{\pm}(B,q)$ as mention in Section \ref{sec.Sem}.

It is easy to see that $\nu=\iota_{*}\mu$. Using this we have
\begin{align*}
\lambda_+ (B,q) &= \lim_n\frac{1}{n}\int_{N}\log\|B^n(x)\|d\nu
              = \lim_n\frac{1}{n}\int_{M}\log\|A^{5n}(x)\|d\mu\\
              &= 5\lambda_+(A,\tq)
              = 5p_3\log 2>0.
\end{align*}

The task is now to construct the sequence $(q_n)_n$. In order to do this, for each $n\in\N$ consider $w_n=(2n,2n+2,2n+1,2n-1,2n-1)$ and define
\[
 \beta(w_n)=(\alpha(2n)R_{\epsilon},\alpha(2n+2),\alpha(2n+1)R_{\delta},\alpha(2n-1),\alpha(2n-1)R_{\epsilon}),
\]
where $\epsilon=n^{-1}(n+1)^{-1}$, $\delta=\arctan(\epsilon)$ and,
\[
 R_{\epsilon} = \begin{pmatrix}
                  1 & 0\\
                  \epsilon & 1
                \end{pmatrix},\quad
R_{\delta} = \begin{pmatrix}
                  \cos(\delta) & -\sin(\delta)\\
                  \sin(\delta) & \cos(\delta)
                \end{pmatrix}.
\]

We proceed to define the sequence by
\[
 q_n=\sum_{w\neq w_n}p_{w}\delta_{\alpha(w)}+p_{w_n}\delta_{\beta(w_n)}.
\]
We claim that $W(q_n,q)\to 0$ if $n$ goes to infinite. Our proof starts with the observation that
\[
 \pi_n=\sum_{w\neq w_n}p_{w}\delta_{(\alpha(w),\alpha(w))}+p_{w_n}\delta_{(\alpha(w_n),\beta(w_n))}
\]
is a coupling of $q$ and $q_n$. Then,
$$
W(q_n,q) \leq \int d_{\infty}(u,v)\pi_k(u,v) = p_{w_n}d(\alpha(w_n),\beta(w_n))
$$
and the latter is bounded from above by
$$
\begin{aligned}
\max\{\|\alpha(2n)(1-R_{\epsilon})\|,\|\alpha(2n-1)(1-R_{\delta})\|,\|\alpha(2n+1)(1-R_{\epsilon})\|\}
\leq \epsilon (n+1) = \frac{1}{n}
\end{aligned}
$$
which proves our claim.

What is left is to show that $\lambda_+(B,q_n)=0$ for all $n$. For this we need the following lema.

\begin{lemma}\label{l.HV}
Let $H_x=\R(1,0)$ and $V_x=\R(0,1)$. If $Z_n=[0:\beta(w_n)]$ then, for all $x\in Z_n$ we have $B(x)H_x=V_{g(x)}$ and $B(x)V_x=H_{g(x)}$
\end{lemma}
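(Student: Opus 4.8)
The plan is to reduce the lemma to a direct matrix computation. For $x\in Z_n$ the value $B(x)=A^5$ depends only on the zeroth coordinate $\beta(w_n)=(\gamma_0,\gamma_1,\gamma_2,\gamma_3,\gamma_4)$ of $\iota(x)$, and by the definition $A^5((\alpha_k)_k)=\alpha_4\alpha_3\alpha_2\alpha_1\alpha_0$ we get the ordered product $B(x)=\gamma_4\gamma_3\gamma_2\gamma_1\gamma_0$, where, unwinding $\alpha(2k-1)=\diag(k,k^{-1})$ and $\alpha(2k)=\diag(k^{-1},k)$,
\[
\gamma_0=\diag(n^{-1},n)R_\epsilon,\quad \gamma_1=\diag((n+1)^{-1},n+1),\quad \gamma_2=\diag(n+1,(n+1)^{-1})R_\delta,
\]
\[
\gamma_3=\diag(n,n^{-1}),\quad \gamma_4=\diag(n,n^{-1})R_\epsilon .
\]
Since $H_x=\R e_1$ and $V_x=\R e_2$ are constant (so $V_{g(x)}=\R e_2$, $H_{g(x)}=\R e_1$), it suffices to show that this product is anti-diagonal, i.e. $B(x)e_1\in\R e_2$ and $B(x)e_2\in\R e_1$.

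For the first identity I would apply the five factors in turn to $e_1$. The choice $\epsilon=n^{-1}(n+1)^{-1}$ is made precisely so that $\gamma_1\gamma_0 e_1=\epsilon e_1+e_2$ (the off-diagonal entry becomes $(n+1)n\epsilon=1$), and the choice $\delta=\arctan\epsilon$ so that $R_\delta$ rotates $\epsilon e_1+e_2$, which makes angle $\pi/2-\delta$ with the horizontal axis, exactly onto the vertical axis; equivalently $\epsilon\cos\delta-\sin\delta=\cos\delta(\epsilon-\tan\delta)=0$. The remaining factors — the diagonal matrix $\diag(n+1,(n+1)^{-1})$ inside $\gamma_2$, then $\gamma_3$, then the lower-triangular $\gamma_4=\diag(n,n^{-1})R_\epsilon$ — all preserve $\R e_2$. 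Hence $B(x)e_1\in\R e_2$, that is $B(x)H_x=V_{g(x)}$.

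For the second identity I would compute $B(x)e_2$ the same way: $\gamma_1\gamma_0 e_2$ is a nonzero multiple of $e_2$, and applying $\gamma_2$, then $\gamma_3$, then $\gamma_4$ one obtains a vector whose second coordinate reduces to $-(n+1)\sin\delta+n^{-1}\cos\delta$; this vanishes because $\sin\delta=\epsilon\cos\delta$ and $(n+1)=(n+1)^2n\,\epsilon$, so the two terms cancel. Thus $B(x)e_2\in\R e_1$, i.e. $B(x)V_x=H_{g(x)}$, and the lemma follows.

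There is no genuine conceptual difficulty here; the only point requiring care is bookkeeping — tracking which of the two diagonal patterns $\diag(k,k^{-1})$ or $\diag(k^{-1},k)$ occurs at each index $2n-1,2n,2n+1,2n+2$, and on which side each shear $R_\epsilon$ is multiplied — since the construction of $w_n$, $\beta(w_n)$, $\epsilon$ and $\delta$ was rigged so that exactly these cancellations occur. In the write-up it is cleanest to record the intermediate vectors $\gamma_0 e_i,\ \gamma_1\gamma_0 e_i,\dots$ in a short displayed line for each of $i=1,2$.
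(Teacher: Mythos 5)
Your proposal is correct and is essentially the paper's own argument: the paper simply multiplies out the five factors and observes that $B(x)$ is anti-diagonal on the cylinder $Z_n$, which is exactly what your column-by-column computation (tracking $\gamma_4\cdots\gamma_0 e_1$ and $\gamma_4\cdots\gamma_0 e_2$ and using $\tan\delta=\epsilon$ and $n(n+1)\epsilon=1$) verifies. The factor ordering, the unwinding of $\beta(w_n)$, and the two cancellations all match the paper's computation, so no further changes are needed.
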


\begin{proof}
Notice that for any $x\in Z_n$
\[
  B(x)=\begin{pmatrix}
       0 &-\epsilon^{-2}\sin(\delta) \\
       \epsilon^2\sin(\delta)+\epsilon\cos(\delta) & 0
       \end{pmatrix}.
\]
Which completes the proof.
\end{proof}
The rest of the proof follows the same arguments as the Bocker-Viana example. For more details we refer the reader to (\cite[Section~7.1]{BockerViana}).

\subsection{Discontinuity example in $\GL^2$}\label{sec.DiscGL2}

Let $M=(\GL)^{\Z}$ let $f:M\to M$  be the shift map over $M$ and $A:M\to\GL$ the product of random matrices. Now consider $X=\GL^2$ with the maximum norm, and let $N=X^{\Z}$ be the space of sequences over $X$ and $g:N\to N$ the shift map over $N$. As before, we can identify $N$ with $M$ using the function $\iota: M\to N$ defined by $\iota((\alpha_n)_n)=(\beta_n)_n$ where $\beta_n=(\alpha_{2n},\alpha_{2n+1})$ which is a bijection between $N$ and $M$.

With the above definition we can see that
\[
 g(\iota((\alpha_n)_n))=f^2((\alpha_n)_n)
\]
and defined $B:N\to \GL$ the linear cocycle induced by $A$ in $N$ through $B(\iota((\alpha_n)_n))=A^2((\alpha_n)_n)$.
In a similar way as in the previous example, there exist a measure $p$ and a sequence of measures $(p_k)_k$ on $X$ converging to $p$ in the Wasserstein topology, such that $\lambda_+(A,p_k)\nrightarrow\lambda_+(A,p)$.

Indeed, let $\alpha:\N\to\GL$ be defined by
$$
\alpha(2k-1) = \begin{pmatrix}
                  k & 0\\
                  0 & k^{-2}\\
                 \end{pmatrix}
\quand
\alpha(2k) = \begin{pmatrix}
                  k^{-2} & 0\\
                  0 & k\\
               \end{pmatrix}.
$$

Take the weights $p_k$ as in the previous section and let $\tq=\sum_{k\in \N}p_k\delta_{\alpha(k)},$ . Consider the space $\Omega=\N^2$ and define the measure on $X$ by $q=\sum_{w\in\Omega}p_{w}\delta_{\alpha(w)}$, where $\alpha(w)=(\alpha(w_1),\alpha(w_2))$ and, $p_w=p_{w_1} p_{w_2}$ if $w=(w_1,w_2)$. Let $\nu=q^{\Z}$ a measure on $N$.

Analysis similar to that in Section \ref{sec.DiscSL5} shows that $q\in P_1(X)$, and using that $\nu=i_{*}\mu$ we have
\begin{align*}
\lambda_+ (B,q) &= \lim_n\frac{1}{n}\int_N\log\|B^n(x)\|d\nu
              = \lim_n\frac{1}{n}\int_{M}\log\|A^{2n}(x)\|d\mu\\
              &= 2\lambda_+(A,\tq)
              = 2p_3\log 2>0.
\end{align*}
For each $n\in\N$ consider $w_n=(2n,2n-1)$ and define $\beta(w_n)=(\beta(2n),\beta(2n-1)),$ by
\[\beta(2n) = \begin{pmatrix}
            1 & -\delta\\
            0 & 1
          \end{pmatrix}
          \alpha(2n)
          \begin{pmatrix}
            1 & 0\\
            \epsilon & 1
          \end{pmatrix}=
          \begin{pmatrix}
            0 & -n\delta\\
            \epsilon n & n
          \end{pmatrix},\\
\]
\[\beta(2n-1) = \begin{pmatrix}
            1 & 0\\
            \epsilon & 1
          \end{pmatrix}
          \alpha(2n-1)=
          \begin{pmatrix}
            n & 0\\
            \epsilon n & n^{-1}
          \end{pmatrix},\]
where $\delta=n^{-(1+\gamma)}$ with $0<\gamma<1$, $\epsilon=n^{-3}\delta^{-1}=n^{\gamma-2}$.

We proceed to define the sequence by
\[
 q_n=\sum_{w\neq w_n}p_{w}\delta_{\alpha(w)}+p_{w_n}\delta_{\beta(w_n)}.
\]
To prove that $W(q_n,q)\to 0$ if $n$ goes to infinite we consider the diagonal coupling of $q_n$ and $q$
\[
 \pi_n=\sum_{w\neq w_n}p_{w}\delta_{(\alpha(w),\alpha(w))}+p_{w_n}\delta_{(\alpha(w_n),\beta(w_n))}
\]
Hence, we have
\begin{align*}
W(q_n,q) & \leq \int d_{\infty}(u,v)\pi_n(u,v)
         = p_{w_n}d_{\infty}(\alpha(w_n),\beta(w_n))\\
         & < \max\{\|\beta(2n)-\alpha(2n)\|,\|\beta(2n-1)-\alpha(2n-1)\|\}\\
         & \leq \max\{\epsilon\sigma_{n},n^{-2}+n\delta\}
         = \max\{n^{\gamma-1},n^{-2}+n^{-\gamma}\}
         \leq 2n^{-l}
\end{align*}
where $l=\min\{\gamma,1-\gamma\}>0$, which proves our claim.

The rest of the proof, that is proving that $\lambda_+(B ,q_n)=0$ for all $n$, runs as before by noticing that for any $x\in Z_n=[0:\beta(w_n)]$
\[
 B(x)=\begin{pmatrix}
        0 &-n^{2}\delta\\
        \epsilon n^{-1} & 0
      \end{pmatrix}.
\]
Indeed, this guarantees that $B(x)H_x=V_{g(x)}$ and $B(x)V_x=H_{g(x)}$ where $H_x=\R(1,0)$ and $V_x=\R(0,1)$.
Finally, applying the first return map argument in \cite[Section~7.1]{BockerViana}, we conclude our proof.

\section*{Acknowledgements}

The first author thanks the Math Department of ICMC where the major part of the work was developed. This work is supported by FAPESP (Fundação de Amparo à Pesquisa do Estado de São Paulo), grant 2018/18990-0.

\bibliography{bibliography}
\bibliographystyle{plain}

\end{document}